\DeclareMathOperator{\Tor}{\mathrm{Tor}}
\begin{document}
\bibliographystyle{plain}

\newtheorem{theorem}{Theorem}[section]
\newtheorem{lemma}{Lemma}[section]
\newtheorem{corollary}{Corollary}[section]
\newtheorem{conjecture}{Conjecture}
\newtheorem{definition}{Definition}
 
\newcommand{\mc}{\mathcal}
\newcommand{\A}{\mc A}
\newcommand{\B}{\mc B}
\newcommand{\cc}{\mc C}
\newcommand{\D}{\mc D}
\newcommand{\E}{\mc E}
\newcommand{\F}{\mc F}
\newcommand{\G}{\mc G}
\newcommand{\hH}{\mc H}
\newcommand{\I}{\mc I}
\newcommand{\J}{\mc J}
\newcommand{\K}{\mc K}
\newcommand{\eL}{\mc L}
\newcommand{\M}{\mc M}
\newcommand{\eN}{\mc N}
\newcommand{\pp}{\mc P}
\newcommand{\qq}{\mc Q}
\newcommand{\rr}{\mc R}
\newcommand{\Ss}{\mc S}
\newcommand{\U}{\mc U}
\newcommand{\V}{\mc V}
\newcommand{\W}{\mc W}
\newcommand{\X}{\mc X}
\newcommand{\Y}{\mc Y}
\newcommand{\zZ}{\mc Z}
\newcommand{\C}{\mathbb{C}}
\newcommand{\R}{\mathbb{R}}
\newcommand{\Q}{\mathbb{Q}}
\newcommand{\T}{\mathbb{T}}
\newcommand{\Z}{\mathbb{Z}}
\newcommand{\aA}{\mathfrak A}
\newcommand{\bB}{\mathfrak B}
\newcommand{\cC}{\mathfrak C}
\newcommand{\dD}{\mathfrak D}
\newcommand{\ee}{\mathfrak E}
\newcommand{\ff}{\mathfrak F}
\newcommand{\iI}{\mathfrak I}
\newcommand{\mM}{\mathfrak M}
\newcommand{\nN}{\mathfrak N}
\newcommand{\pP}{\mathfrak P}
\newcommand{\sS}{\mathfrak S}
\newcommand{\uU}{\mathfrak U}
\newcommand{\fb}{f_{\beta}}
\newcommand{\fg}{f_{\gamma}}
\newcommand{\gb}{g_{\beta}}
\newcommand{\vphi}{\varphi}
\newcommand{\p}{\varphi}
\newcommand{\vep}{\varepsilon}
\newcommand{\bo}{\boldsymbol 0}
\newcommand{\bone}{\boldsymbol 1}
\newcommand{\balpha}{\boldsymbol \alpha}
\newcommand{\bbeta}{\boldsymbol \beta}
\newcommand{\ba}{\boldsymbol a}
\newcommand{\bb}{\boldsymbol b}
\newcommand{\bc}{\boldsymbol c}
\newcommand{\be}{\boldsymbol e}
\newcommand{\bff}{\boldsymbol f}
\newcommand{\bk}{\boldsymbol k}
\newcommand{\bell}{\boldsymbol \ell}
\newcommand{\bm}{\boldsymbol m}
\newcommand{\bn}{\boldsymbol n}
\newcommand{\bgamma}{\boldsymbol \gamma}
\newcommand{\blambda}{\boldsymbol \lambda}
\newcommand{\btheta}{\boldsymbol \theta}
\newcommand{\bq}{\boldsymbol q}
\newcommand{\bt}{\boldsymbol t}
\newcommand{\bu}{\boldsymbol u}
\newcommand{\bv}{\boldsymbol v}
\newcommand{\bw}{\boldsymbol w}
\newcommand{\bx}{\boldsymbol x}
\newcommand{\bwy}{\boldsymbol y}
\newcommand{\bnu}{\boldsymbol \nu}
\newcommand{\bxi}{\boldsymbol \xi}
\newcommand{\bz}{\boldsymbol z}
\newcommand{\tF}{\widehat F}
\newcommand{\tG}{\widehat{G}}
\newcommand{\oK}{\overline{K}}
\newcommand{\oKt}{\overline{K}^{\times}}
\newcommand{\oQ}{\overline{\Q}}
\newcommand{\oq}{\oQ^{\times}}
\newcommand{\oQt}{\oQ^{\times}/\Tor\bigl(\oQ^{\times}\bigr)}
\newcommand{\ot}{\Tor\bigl(\oQ^{\times}\bigr)}
\newcommand{\h}{\frac12}
\newcommand{\hh}{\tfrac12}
\newcommand{\dt}{\text{\rm d}t}
\newcommand{\dx}{\text{\rm d}x}
\newcommand{\dbx}{\text{\rm d}\bx}
\newcommand{\dy}{\text{\rm d}y}
\newcommand{\dmu}{\text{\rm d}\mu}
\newcommand{\dnu}{\text{\rm d}\nu}
\newcommand{\dla}{\text{\rm d}\lambda}
\newcommand{\dlav}{\text{\rm d}\lambda_v}
\newcommand{\trho}{\widetilde{\rho}}
\newcommand{\dtrho}{\text{\rm d}\widetilde{\rho}}
\newcommand{\drho}{\text{\rm d}\rho}
\def\today{\number\time, \ifcase\month\or
January\or February\or March\or April\or May\or June\or
July\or August\or September\or October\or November\or December\fi
\space\number\day, \number\year}
  
\title[Mahler measure]{Lower bounds for Mahler measure that\\depend on the number of monomials}
\author{Shabnam Akhtari and Jeffrey~D.~Vaaler}
\subjclass[2010]{11R06}
\keywords{Mahler Measure, polynomial inequalities}
\thanks{}
\medskip

\address{Department of Mathematics, University of Oregon, Eugene, Oregon 97402 USA \\
Max Planck Institute for Mathematics,
Vivatsgasse 7,
53111 Bonn,
Germany}
\email{akhtari@uoregon.edu}
\medskip

\address{Department of Mathematics, University of Texas, Austin, Texas 78712 USA}
\email{vaaler@math.utexas.edu}

\thanks{Shabnam Akhtari's research is funded by the NSF grant DMS-1601837.}

\begin{abstract}  We prove a new lower bound for the Mahler measure of a polynomial in one and in several variables
that depends on the complex coefficients, and the number of monomials.  In one variable our result generalizes
a classical inequality of Mahler.  In $M$ variables our result depends on $\Z^M$ as an ordered group, and in
general our lower bound depends on the choice of ordering.   
\end{abstract}
\maketitle
\numberwithin{equation}{section}

\section{Introduction}

Let $P(z)$ be a polynomial in $\C[z]$ that is not identically zero.  We assume to begin with that $P$ has degree $N$,
and that $P$ factors into linear factors in $\C[z]$ as
\begin{equation}\label{intro1}
P(z) = c_0 + c_1z + c_2 z^2 + \cdots + c_N z^N = c_N \prod_{n = 1}^N (z - \alpha_n).
\end{equation}
If $e : \R/\Z \rightarrow \T$ is the continuous isomorphism given by $e(t) = e^{2 \pi i t}$, then the Mahler measure of 
$P$ is the positive real number
\begin{equation}\label{intro5}
\mM(P) = \exp\biggl(\int_{\R/\Z} \log \bigl|P\bigl(e(t)\bigr)\bigr|\ \dt\biggr) = |c_N| \prod_{n = 1}^N \max\{1, |\alpha_n|\}.
\end{equation}
The equality on the right of (\ref{intro5}) follows from Jensen's formula.  If $P_1(z)$ and $P_2(z)$ are both nonzero
polynomials in $\C[z]$, then it is immediate from (\ref{intro5}) that
\begin{equation*}\label{intro7}
\mM\bigl(P_1 P_2\bigr) = \mM\big(P_1\bigr) \mM\bigl(P_2\bigr).
\end{equation*}
 Mahler measure plays an important role in number theory and in algebraic dynamics, as discussed in \cite{everest1999}, 
\cite{pritsker2007}, \cite[Chapter 5]{schmidt1995}, and \cite{smyth2007}.  Here we restrict our attention to the 
problem of proving a lower bound for $\mM(P)$ when the polynomial $P(z)$ has complex coefficients. We establish an
analogous result for polynomials in several variables.

For $P(z)$ of degree $N$ and given by (\ref{intro1}), there is a well known lower bound due to Mahler which asserts that
\begin{equation}\label{intro10}
|c_n| \le \binom{N}{n} \mM(P),\quad\text{for each $n = 0, 1, 2, \dots , N$}.
\end{equation}
The inequality (\ref{intro10}) is implicit in \cite{mahler1960}, and is stated explicitly in \cite[section 2]{mahler1962}, (see also
the proof in \cite[Theorem 1.6.7]{bombieri2006}).  If
\begin{equation*}\label{intro15}
P(z) = (z \pm 1)^N,
\end{equation*}
then there is equality in (\ref{intro10}) for each $n = 0, 1, 2, \dots , N$.    

We now assume that $P(z)$ is a polynomial in $\C[z]$ that is not identically zero, and we assume that $P(z)$ is given by
\begin{equation}\label{intro20}
P(z) = c_0 z^{m_0} + c_1 z^{m_1} + c_2 z^{m_2} + \cdots + c_N z^{m_N},
\end{equation}
where $N$ is a nonnegative integer, and $m_0, m_1, m_2, \dots , m_N$, are nonnegative integers such that
\begin{equation}\label{intro25}
m_0 < m_1 < m_2 < \cdots < m_N.
\end{equation}
We wish to establish a lower bound for $\mM(P)$ which depends on the coefficients and on the number of monomials,
but which does {\it not} depend on the degree of $P$.  Such a result was recently proved by Dobrowolski and 
Smyth \cite{smyth2017}.  We use a similar argument, but we obtain a sharper result that includes
Mahler's inequality (\ref{intro10}) as a special case.

\begin{theorem}\label{thmintro1}  Let $P(z)$ be a polynomial in $\C[z]$ that is not identically zero, and is given by
{\rm (\ref{intro20})}.  Then we have
\begin{equation}\label{intro30}
|c_n| \le \binom{N}{n} \mM(P),\quad\text{for each $n = 0, 1, 2, \dots , N$}.
\end{equation}
\end{theorem}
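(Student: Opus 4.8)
The plan is to deduce the bound from Mahler's classical inequality (\ref{intro10}) by \emph{compressing} the exponents. First I would factor out $z^{m_0}$: since $\mM(z^{m_0}) = 1$ and this factor alters neither the coefficients $c_n$ nor the Mahler measure, there is no loss in assuming $m_0 = 0$, so that $c_0 \neq 0$ and $P$ has degree $m_N$. Now introduce the \emph{compressed} polynomial
\begin{equation*}
Q(z) = c_0 + c_1 z + c_2 z^2 + \cdots + c_N z^N,
\end{equation*}
which carries the same coefficients but on the consecutive exponents $0, 1, \dots, N$, so that $\deg Q = N$. Applying (\ref{intro10}) to $Q$ gives $|c_n| \le \binom{N}{n} \mM(Q)$ for every $n$, with exactly the desired binomial constant. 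Hence the theorem reduces to the single inequality
\begin{equation}\label{plan-key}
\mM(Q) \le \mM(P).
\end{equation}

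Before attacking (\ref{plan-key}) it is worth recording why one cannot simply mimic the root-theoretic proof of (\ref{intro10}) on $P$ itself. Writing $P(z) = c_N \prod_{k=1}^{m_N}(z - \beta_k)$, the coefficient $c_n$ is $\pm c_N$ times an elementary symmetric function of the $\beta_k$, and the symmetric-function estimate yields only the factor $\binom{m_N}{m_n}$, which involves the \emph{degree} $m_N$ rather than the number of monomials. The sparsity of $P$ forces almost all of these symmetric functions to vanish, and extracting the improvement from $\binom{m_N}{m_n}$ down to $\binom{N}{n}$ is exactly the content of (\ref{plan-key}). The two extreme cases are painless and serve as a check: for $n = N$ one has $|c_N| \le \mM(P)$ since $\mM(P) = |c_N| \prod_k \max\{1,|\beta_k|\}$, and for $n = 0$ one has $|c_0| = |c_N| \prod_k |\beta_k| \le \mM(P)$; both match $\binom{N}{0} = \binom{N}{N} = 1$.

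The heart of the argument, and the step I expect to be hardest, is the compression inequality (\ref{plan-key}): passing from arbitrary increasing exponents to consecutive ones does not increase the Mahler measure. The natural framework is to realize both polynomials as pullbacks of the single linear form $\Phi(x_1, \dots, x_N) = c_0 + c_1 x_1 + \cdots + c_N x_N$ under the monomial substitutions $x_j = z^j$ (giving $Q$) and $x_j = z^{m_j}$ (giving $P$); then $\log \mM(Q)$ and $\log \mM(P)$ are integrals of $\log|\Phi|$ over two closed one-parameter subgroups of the torus $\T^N$, and the goal is to show that the \emph{consecutive} substitution minimizes this integral. Applying Jensen's formula one variable at a time replaces an upper block of monomials by a $\max$, which is how the combinatorial factor should ultimately emerge. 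The delicate point is that $\mM$ does \emph{not} vary monotonically as a single gap is widened—numerical experiments show the measure can overshoot its large-gap (Lawton) limit—so (\ref{plan-key}) cannot be proved by a naive one-gap-at-a-time comparison; it must instead be established by treating the consecutive configuration as a genuine global minimum, presumably via an induction on $N$ that peels off the extreme monomial while controlling $\log|\Phi|$ on the relevant subtorus. Once this minimization is in hand the theorem follows at once, and the same scheme—ordering the exponent vectors of $\Z^M$ and compressing them to $\{0, 1, \dots, N\}$—is what will drive the several-variable version and explain its dependence on the chosen ordering.
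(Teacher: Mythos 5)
Your reduction hinges on the \emph{compression inequality} $\mM(Q) \le \mM(P)$, where $Q$ carries the same coefficients on the consecutive exponents $0,1,\dots,N$. You correctly identify this as the hard step and only sketch a strategy for it, but the difficulty is worse than that: the inequality is false. Take $N = 2$ and
\begin{equation*}
P(z) = 1 + z - z^3, \qquad Q(z) = 1 + z - z^2 .
\end{equation*}
Here $\mM(Q) = \tfrac{1}{2}\bigl(1 + \sqrt{5}\bigr) \approx 1.618$, since $z^2 - z - 1$ has the golden ratio as its unique root outside the unit circle, while $\mM(P) \approx 1.3247$ is the plastic number, the unique real root of $z^3 - z - 1$ (its two complex conjugate roots lie inside the unit circle because the product of all three roots is $1$). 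Thus $\mM(Q) > \mM(P)$, and no argument can establish (\ref{plan-key}) in general. The theorem itself survives in this example only because the binomial coefficients $\binom{2}{n}$ leave enough slack; the point is that the full strength of Mahler's inequality applied to the compressed polynomial $Q$ is genuinely stronger than (\ref{intro30}), and that stronger statement fails. Your own observation that $\mM$ can overshoot its Lawton limit as gaps widen was the warning sign: the consecutive configuration is not a global minimizer.

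The paper avoids any comparison between $\mM(P)$ and $\mM(Q)$ of the compressed type. Instead it inducts on the number of monomials $N+1$, normalizes $m_0 = 0$, and applies Mahler's derivative inequality $\mM(P') \le m_N\, \mM(P)$ to both $P$ and the reciprocal polynomial $z^{m_N} P(z^{-1})$ (which has the same Mahler measure). Each derivative kills one monomial, so the inductive hypothesis gives $|c_n| m_n \le m_N \binom{N-1}{n-1}\mM(P)$ and $|c_n|(m_N - m_n) \le m_N \binom{N-1}{n}\mM(P)$; adding these and invoking Pascal's rule yields $|c_n| m_N \le m_N \binom{N}{n}\mM(P)$. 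If you want to salvage your plan, you would need to replace (\ref{plan-key}) by some weaker comparison that already carries the binomial factors, at which point you are essentially forced back to an induction of this kind.
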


Let $f : \R/\Z \rightarrow \C$ be a trigonometric polynomial, not identically zero, and a sum of at most $N + 1$ distinct 
characters.  Then we can write $f$ as
\begin{equation}\label{intro33}
f(t) = \sum_{n = 0}^N c_n e(m_n t),
\end{equation}
where $c_0, c_1, c_2, \dots , c_N$, are complex coefficients, and $m_0, m_1, m_2, \dots , m_N$, are integers 
such that
\begin{equation*}\label{intro35}
m_0 < m_1 < m_2 < \cdots < m_N.
\end{equation*}
As $f$ is not identically zero, the Mahler measure of $f$ is the positive number
\begin{equation*}\label{intro38}
\mM(f) = \exp\biggl(\int_{\R/\Z} \log |f(t)|\ \dt\biggr).
\end{equation*}
It is trivial that $f(t)$ and $e(-m_0 t)f(t)$ have the same Mahler measure.  Thus we get the following alternative formulation
of Theorem \ref{thmintro1}.

\begin{corollary}\label{corintro1}  Let $f(t)$ be a trigonometric polynomial with complex coefficients that is not identically
zero, and is given by {\rm (\ref{intro33})}.  Then we have
\begin{equation}\label{intro40}
|c_n| \le \binom{N}{n} \mM(f),\quad\text{for each $n = 0, 1, 2, \dots , N$}.
\end{equation}
\end{corollary}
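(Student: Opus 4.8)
The plan is to reduce Corollary~\ref{corintro1} directly to Theorem~\ref{thmintro1} by turning the trigonometric polynomial $f$ into an honest polynomial in $\C[z]$ under the substitution $z = e(t)$. The one subtlety is that the exponents $m_0 < m_1 < \cdots < m_N$ in (\ref{intro33}) are merely integers, and may be negative, so $f(t)$ corresponds a priori only to a Laurent polynomial in $z$. To repair this I would first normalize by the character $e(-m_0 t)$. Since $|e(-m_0 t)| = 1$ for every $t \in \R/\Z$, multiplication by this character leaves $|f(t)|$ unchanged pointwise, and therefore
\begin{equation*}
\mM\bigl(e(-m_0 t) f(t)\bigr) = \exp\biggl(\int_{\R/\Z} \log\bigl|e(-m_0 t) f(t)\bigr|\ \dt\biggr) = \mM(f).
\end{equation*}
The effect of this twist is to lower the smallest frequency to $0$, which is exactly what makes the shifted function extend to a polynomial rather than a Laurent polynomial.

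Next I would set $m_n' = m_n - m_0$ for $n = 0, 1, \dots, N$, so that $0 = m_0' < m_1' < \cdots < m_N'$ is a strictly increasing sequence of \emph{nonnegative} integers, and define
\begin{equation*}
P(z) = \sum_{n=0}^N c_n z^{m_n'} \in \C[z].
\end{equation*}
This $P$ is not identically zero and has precisely the shape (\ref{intro20}) demanded by Theorem~\ref{thmintro1}, with the same coefficients $c_0, c_1, \dots, c_N$. Substituting $z = e(t)$ gives the identity
\begin{equation*}
P\bigl(e(t)\bigr) = \sum_{n=0}^N c_n e\bigl((m_n - m_0)t\bigr) = e(-m_0 t)\sum_{n=0}^N c_n e(m_n t) = e(-m_0 t) f(t),
\end{equation*}
so $P(e(t))$ and $e(-m_0 t) f(t)$ are the same function on $\R/\Z$. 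By the definition (\ref{intro5}) of Mahler measure it follows that $\mM(P) = \mM\bigl(e(-m_0 t) f(t)\bigr) = \mM(f)$.

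Finally, applying Theorem~\ref{thmintro1} to $P$ yields $|c_n| \le \binom{N}{n}\mM(P)$ for each $n = 0, 1, \dots, N$, and substituting $\mM(P) = \mM(f)$ gives exactly (\ref{intro40}). I expect no genuine obstacle here: the whole content of the corollary is the observation that any nonzero trigonometric polynomial with at most $N+1$ terms becomes, after a single character twist, the restriction to the unit circle of a polynomial of the form (\ref{intro20}) having the same Mahler measure and the same coefficients. The only point deserving a line of care is the bookkeeping that each coefficient $c_n$ is carried to the same index $n$ under the shift $m_n \mapsto m_n - m_0$, so that the binomial factor $\binom{N}{n}$ produced by Theorem~\ref{thmintro1} lines up with the one asserted in (\ref{intro40}).
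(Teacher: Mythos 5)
Your proposal is correct and is essentially the argument the paper itself uses: the remark preceding the corollary observes that $f(t)$ and $e(-m_0 t)f(t)$ have the same Mahler measure, so the corollary is just Theorem \ref{thmintro1} applied to the polynomial with shifted exponents $m_n - m_0$. Your write-up simply spells out this same reduction in more detail.
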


For positive integers $M$ we will prove an extension of Corollary \ref{corintro1} to trigonometric polynomials
\begin{equation}\label{intro50}
F : (\R/\Z)^M \rightarrow \C,
\end{equation}
that are not identically zero.  The Fourier transform of $F$ is the function
\begin{equation*}\label{intro53}
\tF : \Z^M \rightarrow \C,
\end{equation*}
defined at each lattice point $\bk$ in $\Z^M$ by
\begin{equation}\label{intro55}
\tF(\bk) = \int_{(\R/\Z)^M} F(\bx) e\bigl(-\bk^T \bx\bigr)\ \dbx.
\end{equation}
In the integral on the right of (\ref{intro55}) we write $\dbx$ for integration with respect to a Haar measure on the
Borel subsets of $(\R/\Z)^M$ normalized so that $(\R/\Z)^M$ has measure $1$.  We write $\bk$ for a (column) vector
in $\Z^M$, $\bk^T$ for the transpose of $\bk$, $\bx$ for a (column) vector in $(\R/\Z)^M$, and therefore
\begin{equation*}\label{intro60}
\bk^T \bx = k_1 x_1 + k_2 x_2 + \cdots + k_N x_N.
\end{equation*} 
As $F$ is not identically zero, the Mahler measure of $F$ is the positive real number
\begin{equation*}\label{intro75}
\mM(F) = \exp\biggl(\int_{(\R/\Z)^M} \log \bigl|F(\bx)\bigr|\ \dbx\biggr).
\end{equation*}
We assume that $\sS \subseteq \Z^M$ is a nonempty, finite set that contains the support of $\tF$.  That is, we assume that
\begin{equation}\label{intro80}
\{\bk \in \Z^M : \tF(\bk) \not= 0\} \subseteq \sS,
\end{equation}
and therefore $F$ has the representation
\begin{equation}\label{intro70}
F(\bx) = \sum_{\bk \in \sS} \tF(\bk) e\bigl(\bk^T \bx\bigr).
\end{equation}
Basic results in this setting can be found in Rudin \cite[Sections 8.3 and 8.4]{rudin1962}.  

If $\balpha = (\alpha_m)$ is a (column) vector in $\R^M$, we write
\begin{equation*}\label{intro100}
\vphi_{\balpha} : \Z^M \rightarrow \R
\end{equation*}
for the homomorphism given by
\begin{equation}\label{intro105}
\vphi_{\balpha}(\bk) = \bk^T \balpha = k_1 \alpha_1 + k_2 \alpha_2 + \cdots + k_M \alpha_M.
\end{equation}
It is easy to verify that $\vphi_{\balpha}$ is an injective homomorphism if and only if the coordinates
$\alpha_1, \alpha_2, \dots , \alpha_M$, are $\Q$-linearly independent real numbers. 

Let the nonempty, finite set $\sS \subseteq \Z^M$ have cardinality $N+1$,
where $0 \le N$.  If $\vphi_{\balpha}$ is an injective homomorphism, then the set
\begin{equation*}\label{intro110}
\big\{\vphi_{\balpha}(\bk) : \bk \in \sS\big\}
\end{equation*}
consists of exactly $N+1$ real numbers.  It follows that the set $\sS$ can be indexed so that
\begin{equation}\label{intro115}
\sS = \big\{\bk_0, \bk_1, \bk_2, \dots , \bk_N\big\},
\end{equation}
and
\begin{equation}\label{intro120}
\vphi_{\balpha}\bigl(\bk_0\bigr) < \vphi_{\balpha}\bigl(\bk_1\bigr) 
	< \vphi_{\balpha}\bigl(\bk_2\bigr) < \cdots < \vphi_{\balpha}\bigl(\bk_N\bigr).
\end{equation}
By using a limiting argument introduced in a paper of Boyd \cite{boyd1981}, we will prove the following
generalization of (\ref{intro40}).

\begin{theorem}\label{thmintro2}  Let $F : (\R/\Z)^M \rightarrow \C$ be a trigonometric polynomial that is not
identically zero, and is given by {\rm (\ref{intro70})}.  Let $\vphi_{\balpha} : \Z^M \rightarrow \R$ be an
injective homomorphism, and assume that the finite set $\sS$, which contains the support of $\tF$, is indexed so 
that {\rm (\ref{intro115})} and {\rm (\ref{intro120})} hold.  Then we have
\begin{equation}\label{intro125}
\bigl|\tF\bigl(\bk_n\bigr)\bigr| \le \binom{N}{n} \mM(F),\quad\text{for each $n = 0, 1, 2, \dots , N$}.
\end{equation}
\end{theorem}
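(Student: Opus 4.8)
The plan is to reduce Theorem~\ref{thmintro2} to the one--variable Corollary~\ref{corintro1} by restricting $F$ to the circles traced out by the one--parameter subgroups $t \mapsto t\bn \bmod \Z^M$, $\bn \in \Z^M$, and then to recover the full $M$--dimensional Mahler measure through the limiting argument of Boyd and Lawton.

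To each $\bn \in \Z^M$ I would attach the one--variable trigonometric polynomial
\[
 f_{\bn}(t) = F(t\bn) = \sum_{\bk \in \sS} \tF(\bk)\, e\bigl((\bk^T\bn)\,t\bigr),
\]
whose frequencies are the integers $\bk^T\bn$, $\bk \in \sS$. I would then restrict attention to vectors $\bn$ lying in the open convex cone
\[
 \cc = \bigl\{\bv \in \R^M : \sgn\bigl((\bk_i-\bk_j)^T\bv\bigr) = \sgn\bigl((\bk_i-\bk_j)^T\balpha\bigr)\ \text{for all}\ 0 \le i, j \le N\bigr\},
\]
which contains $\balpha$ because the injectivity of $\vphi_{\balpha}$ forces $(\bk_i-\bk_j)^T\balpha \ne 0$ whenever $i \ne j$. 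For $\bn \in \cc \cap \Z^M$ the map $\bk \mapsto \bk^T\bn$ is injective on $\sS$ and respects the ordering (\ref{intro120}); hence the frequencies of $f_{\bn}$ are distinct, no two terms of the sum coalesce, the Fourier coefficient of $f_{\bn}$ at the frequency $\bk_n^T\bn$ equals $\tF(\bk_n)$, and $\bk_n^T\bn$ is the $(n+1)$--st smallest among the frequencies. Corollary~\ref{corintro1} applied to $f_{\bn}$ therefore gives
\[
 \bigl|\tF(\bk_n)\bigr| \le \binom{N}{n}\,\mM(f_{\bn}), \qquad n = 0, 1, \dots, N,
\]
for every $\bn \in \cc \cap \Z^M$.

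Next I would choose admissible $\bn$ along which $\mM(f_{\bn})$ converges to $\mM(F)$. Let $\bn_j$ be the nearest integer vector to $q_j\balpha$, where $q_j \to \infty$ through the positive integers. Then $\bn_j/q_j \to \balpha$, so $\bn_j \in \cc$ for all large $j$; and writing $q(\bn) = \min\{|\bv|_\infty : \bv \in \Z^M\setminus\{\bo\},\ \bv^T\bn = 0\}$, one checks that $q(\bn_j) \to \infty$: a nonzero integer relation $\bv^T\bn_j = 0$ with $|\bv|_\infty \le R$ would give $|\bv^T\balpha| \le MR/(2q_j)$, which is impossible for large $j$ since $\min\{|\bv^T\balpha| : 0 < |\bv|_\infty \le R\}$ is a positive constant by the $\Q$--linear independence of the coordinates of $\balpha$.

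The crux of the proof is the convergence $\mM(f_{\bn_j}) \to \mM(F)$, which is precisely the Boyd--Lawton limit $\lim_{q(\bn)\to\infty}\mM(f_{\bn}) = \mM(F)$. This is where the real work lies: because $\log|F|$ has logarithmic singularities along the zero set of $F$, the averages $\int_{\R/\Z}\log|f_{\bn}(t)|\,\dt$ of $\log|F|$ over the circles $t\bn \bmod \Z^M$ are not controlled by equidistribution of continuous test functions alone, and instead require the uniform integrability estimates underlying the argument of \cite{boyd1981}. Granting this convergence and letting $j \to \infty$ in the displayed inequality yields $\bigl|\tF(\bk_n)\bigr| \le \binom{N}{n}\mM(F)$, as desired.
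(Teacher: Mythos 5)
Your proposal is correct and follows essentially the same route as the paper: restrict $F$ along one-parameter subgroups $t \mapsto t\bn$ for lattice points $\bn$ that preserve the $\balpha$-ordering of $\sS$ (your cone $\cc$ plays the role of the paper's set $\B(\balpha,\sS)$), apply the one-variable Corollary~\ref{corintro1}, and pass to the limit via Boyd--Lawton; your nearest-integer choice $\bn_j \approx q_j\balpha$ is a slightly more elementary substitute for the paper's Dirichlet-approximation construction in Lemma~\ref{lemorder2} and works just as well. The only remark worth making is that, as the paper points out, one does not need the full Lawton limit $\mM(f_{\bn})\to\mM(F)$: Boyd's easier one-sided inequality $\limsup_{\nu(\bn)\to\infty}\mM(f_{\bn}) \le \mM(F)$ already suffices to conclude.
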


Let $F$ and $\vphi_{\balpha} : \Z^M \rightarrow \R$ be as in the statement of Theorem \ref{thmintro2}, and then let 
$\vphi_{\bbeta} : \Z^M \rightarrow \R$ be a second injective homomorphism.  It follows that $\sS$ can be indexed so that
(\ref{intro115}) and (\ref{intro120}) hold, and $\sS$ can also be indexed so that
\begin{equation}\label{intro145}
\sS = \big\{\bell_0, \bell_1, \bell_2, \dots , \bell_N\big\},
\end{equation}
and
\begin{equation}\label{intro150}
\vphi_{\bbeta}\bigl(\bell_0\bigr) < \vphi_{\bbeta}\bigl(\bell_1\bigr) 
	< \vphi_{\bbeta}\bigl(\bell_2\bigr) < \cdots < \vphi_{\bbeta}\bigl(\bell_N\bigr).
\end{equation}
In general the indexing (\ref{intro115}) is distinct from the indexing (\ref{intro145}).  Therefore the system of inequalities
\begin{equation}\label{intro155}
\bigl|\tF\bigl(\bk_n\bigr)\bigr| \le \binom{N}{n} \mM(F),\quad\text{for each $n = 0, 1, 2, \dots , N$},
\end{equation}
and
\begin{equation}\label{intro160}
\bigl|\tF\bigl(\bell_n\bigr)\bigr| \le \binom{N}{n} \mM(F),\quad\text{for each $n = 0, 1, 2, \dots , N$},
\end{equation}
which follow from Theorem \ref{thmintro2}, are different, and in general neither system of inequalities implies the other.

\section{Proof of Theorem \ref{thmintro1}}

It follows from (\ref{intro5}) that the polynomial $P(z)$, and the polynomial $z^{-m_0} P(z)$, have the same
Mahler measure.  Hence we may assume without loss of generality that the exponents $m_0, m_1, m_2, \dots , m_N$, in the representation (\ref{intro20}) satisfy the more restrictive condition
\begin{equation}\label{mahler250}
0 = m_0 < m_1 < m_2 < \cdots < m_N.
\end{equation}

If $N = 0$ then (\ref{intro30}) is trivial.  If $N = 1$, then
\begin{equation*}\label{mahler252}
\binom{1}{0} = \binom{1}{1} = 1,
\end{equation*}
and using Jensen's formula we find that
\begin{equation*}\label{mahler254}
\mM\bigl(c_0 + c_1 z^{m_1}\bigr) = \max\{|c_0|, |c_1|\}.
\end{equation*}
Therefore the inequality (\ref{intro30}) holds if $N = 1$.  Throughout the remainder of the proof we assume that 
$2 \le N$, and we argue by induction on $N$.  Thus we assume that the inequality (\ref{intro30}) holds for polynomials 
that can be expressed as a sum of strictly less than $N + 1$ monomials.

Besides the polynomial
\begin{equation}\label{mahler256}
P(z) = c_0 z^{m_0} + c_1 z^{m_1} + c_2 z^{m_2} + \cdots + c_N z^{m_N},
\end{equation}
we will work with the polynomial
\begin{equation}\label{mahler258}
Q(z) = z^{m_N} P\bigl(z^{-1}\bigr) = c_0 z^{m_N - m_0} + c_1 z^{m_N - m_1} + c_2 z^{m_N- m_2} + \cdots + c_N.
\end{equation}
It follows from (\ref{intro5}) that
\begin{equation}\label{mahler260}
\mM(Q) = \exp\biggl(\int_{\R/\Z} \log \bigl|e(m_N t) P\bigl(e(- t)\bigr)\bigr|\ \dt\biggr) = \mM(P).
\end{equation}
Next we apply an inequality of Mahler \cite{mahler1961} to conclude that both
\begin{equation}\label{mahler262}
\mM\bigl(P^{\prime}\bigr) \le m_N \mM(P),\quad\text{and}\quad \mM\bigl(Q^{\prime}\bigr) \le m_N \mM(Q).
\end{equation} 
Because
\begin{equation*}\label{mahler266}
P^{\prime}(z) = \sum_{n = 1}^N c_n m_n z^{m_n - 1}
\end{equation*}
is a sum of strictly less than $N + 1$ monomials, we can apply the inductive hypothesis to $P^{\prime}$.  It follows that
\begin{equation}\label{mahler271}
|c_n| m_n \le \binom{N-1}{n-1} \mM\bigl(P^{\prime}\bigr) \le m_N \binom{N-1}{n-1} \mM(P)
\end{equation}  
for each $n = 1, 2, \dots , N$.  As 
\begin{equation*}\label{276}
m_0 = 0,\quad\text{and}\quad \binom{N-1}{-1} = 0,
\end{equation*}
it is trivial that (\ref{mahler271}) also holds at $n = 0$.

In a similar manner, 
\begin{equation*}\label{mahler281}
Q^{\prime}(z) = \sum_{n = 0}^{N-1} c_n (m_N - m_n) z^{m_N - m_n - 1}
\end{equation*}
is a sum of strictly less that $N + 1$ monomials.  We apply the inductive hypothesis to $Q^{\prime}$, and get the inequality
\begin{equation}\label{mahler286}
|c_n| (m_N - m_n) \le \binom{N-1}{N - 1 - n} \mM\bigl(Q^{\prime}\bigr) \le m_N \binom{N-1}{n} \mM(Q)
\end{equation}
for each $n = 0, 1, 2, \dots , N-1$.  In this case we have
\begin{equation*}\label{mahler291}
(m_N - m_N) = 0,\quad\text{and}\quad \binom{N-1}{N} = 0,
\end{equation*}
and therefore (\ref{mahler286}) also holds at $n = N$.

To complete the proof we use the identity (\ref{mahler260}), and we apply the inequality (\ref{mahler271}), 
and the inequality (\ref{mahler286}).  In this way we obtain the bound
\begin{equation}\label{mahler296}
\begin{split}
|c_n| m_N &= |c_n| m_n + |c_n| (m_N - m_n)\\
                 &\le m_N \binom{N-1}{n-1} \mM(P) + m_N \binom{N-1}{n} \mM(P)\\
                 &= m_N \binom{N}{n} \mM(P).
\end{split}
\end{equation}
This verifies (\ref{intro30}).

\section{Archimedean orderings in the group $\Z^M$}

In this section we consider $\Z^M$ as an ordered group.  To avoid degenerate situations, we assume throughout 
this section that $2 \le M$.

Let $\balpha$ belong to $\R^M$, and let $\vphi_{\balpha} : \Z^M \rightarrow \R$ be the homomorphism defined by
(\ref{intro105}).  We assume that the coordinates $\alpha_1, \alpha_2, \dots , \alpha_M$, are $\Q$-linearly independent 
so that $\vphi_{\balpha}$ is an injective homomorphism.  It follows, as in \cite[Theorem 8.1.2 (c)]{rudin1962}, that 
$\vphi_{\balpha}$ induces an archimedean ordering in the group $\Z^M$.  That is, if $\bk$ and $\bell$ are distinct points 
in $\Z^M$ we write $\bk < \bell$ if and only if 
\begin{equation*}\label{order-5}
\vphi_{\balpha}(\bk) = \bk^T \balpha < \vphi_{\balpha}(\bell) = \bell^T \balpha
\end{equation*}
in $\R$.  Therefore $\bigl(\Z^M, <\bigr)$ is an ordered group, and the order is archimedean.  If $\sS \subseteq \Z^M$ is 
a nonempty, finite subset of cardinality $N + 1$, then the elements of $\sS$ can be indexed so that
\begin{equation}\label{order1}
\sS = \big\{\bk_0, \bk_1, \bk_2, \dots , \bk_N\big\}
\end{equation}
and
\begin{equation}\label{order5}
\bk_0^T \balpha < \bk_1^T \balpha < \bk_2^T \balpha < \cdots < \bk_N^T \balpha.
\end{equation}
A more general discussion of ordered groups is given in \cite[Chapter 8]{rudin1962}.  Here we require only the indexing 
(\ref{order1}) that is induced in the finite subset $\sS$ by the injective homomorphism $\vphi_{\balpha}$.

If $\bb = (b_m)$ is a (column) vector in $\Z^M$, we define the norm
\begin{equation}\label{order7}
\|\bb\|_{\infty} = \max\big\{|b_m| : 1 \le m \le M\big\}.
\end{equation} 
And if $\sS \subseteq \Z^M$ is a nonempty, finite subset we write
\begin{equation*}\label{order10}
\|\sS\|_{\infty} = \max\big\{\|\bk\|_{\infty} : \bk \in \sS\big\}.
\end{equation*}
Following Boyd \cite{boyd1981}, we define the function
\begin{equation*}\label{order20}
\nu : \Z^M \setminus \{\bo\} \rightarrow \{1, 2, 3, \dots \}
\end{equation*}
by
\begin{equation}\label{order25}
\nu(\ba) = \min\big\{\|\bb\|_{\infty} : \text{$\bb \in \Z^M$, $\bb \not= \bo$, and $\bb^T \ba = 0$}\big\}.
\end{equation}
It is known (see \cite{boyd1981}) that the function $\ba \mapsto \nu(\ba)$ is unbounded, and a stronger conclusion follows
from our Lemma \ref{lemorder2}.  Moreover, if $\nu(\ba)$ is sufficiently large, then the map $\bk \mapsto \bk^T \ba$ 
restricted to points $\bk$ in the finite subset $\sS$ takes distinct integer values, and therefore induces an ordering in 
$\sS$.  This follows immediately from the triangle inequality for the norm (\ref{order7}), and was noted in \cite{boyd1981}.  
As this result will be important in our proof of Theorem \ref{thmintro2}, we prove it here as a separate lemma.

\begin{lemma}{\sc [D.~Boyd]}\label{lemorder1}  Let $\sS \subseteq \Z^M$ be a nonempty, finite subset with cardinality
$|\sS| = N + 1$, and let $\ba \not= \bo$ be a point in $\Z^M$ such that
\begin{equation}\label{order28}
2 \|\sS\|_{\infty} < \nu(\ba).
\end{equation}
Then
\begin{equation}\label{order30}
\big\{\bk^T \ba : \bk \in \sS\big\}
\end{equation}
is a collection of $N + 1$ distinct integers.
\end{lemma}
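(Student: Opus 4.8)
The plan is to establish the equivalent statement that the map $\bk \mapsto \bk^T \ba$ is injective on $\sS$. Since every entry of $\bk$ and of $\ba$ is an integer, each value $\bk^T \ba$ is an integer; hence an injective map on a set of cardinality $N+1$ immediately produces a collection of $N+1$ distinct integers, which is the assertion (\ref{order30}). So the whole lemma reduces to proving this injectivity.

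First I would argue by contradiction: suppose there are two \emph{distinct} points $\bk$ and $\bk'$ in $\sS$ with $\bk^T \ba = (\bk')^T \ba$. Setting $\bb = \bk - \bk'$, this is a vector in $\Z^M$ with $\bb \not= \bo$ (because $\bk \not= \bk'$) and $\bb^T \ba = 0$. Thus $\bb$ is an admissible competitor in the minimization (\ref{order25}) defining $\nu(\ba)$, and consequently $\nu(\ba) \le \|\bb\|_{\infty}$.

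Next I would bound $\|\bb\|_{\infty}$ from above using the triangle inequality for the norm (\ref{order7}): we have $\|\bb\|_{\infty} = \|\bk - \bk'\|_{\infty} \le \|\bk\|_{\infty} + \|\bk'\|_{\infty} \le 2\|\sS\|_{\infty}$, where the last step uses that both $\bk$ and $\bk'$ lie in $\sS$. Chaining this with the previous inequality gives $\nu(\ba) \le 2\|\sS\|_{\infty}$, which directly contradicts the hypothesis (\ref{order28}) that $2\|\sS\|_{\infty} < \nu(\ba)$. Hence no two distinct points of $\sS$ can share a value, the map is injective, and the proof is complete.

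The argument is entirely elementary, so there is no serious obstacle to overcome; the only points demanding a moment of care are verifying that the difference $\bb = \bk - \bk'$ is genuinely nonzero (so that it is eligible in the definition of $\nu$) and applying the triangle inequality correctly to account for the factor of $2$. It is precisely this factor of $2$ that forces the quantitative form of the hypothesis (\ref{order28}), and it is this lemma that will later let us replace $\vphi_{\balpha}$ by an integer-valued functional $\bk \mapsto \bk^T \ba$ inducing the same ordering on $\sS$ in the proof of Theorem \ref{thmintro2}.
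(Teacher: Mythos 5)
Your proposal is correct and is essentially identical to the paper's own proof: both argue by contradiction, form the nonzero difference $\bb = \bk - \bk'$ of two distinct points with equal images, note $\bb^T\ba = 0$ makes $\bb$ admissible in the definition of $\nu(\ba)$, and derive $\nu(\ba) \le 2\|\sS\|_{\infty}$ from the triangle inequality, contradicting (\ref{order28}). No discrepancies to report.
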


\begin{proof}  If $N = 0$ the result is trivial.  Assume that $1 \le N$, and let $\bk$ and $\bell$ be distinct points
in $\sS$.  If
\begin{equation*}\label{order35}
\bk^T \ba = \bell^T \ba,
\end{equation*} 
then
\begin{equation*}\label{order40}
(\bk - \bell)^T \ba = \bo.
\end{equation*}
It follows that
\begin{equation*}\label{order45}
\nu(\ba) \le \|\bk - \bell\|_{\infty} \le \|\bk\|_{\infty} + \|\bell\|_{\infty} \le 2 \|\sS\|_{\infty},
\end{equation*}
and this contradicts the hypothesis (\ref{order28}).  We conclude that (\ref{order30}) contains $N + 1$ distinct integers.
\end{proof}

Let $\vphi_{\balpha} : \Z^M \rightarrow \R$ be an injective homomorphism, and let $\sS \subseteq \Z^M$ be a nonempty,
finite subset of cardinality $N + 1$.  We assume that the elements of $\sS$ are indexed so that both (\ref{order1}) 
and (\ref{order5}) hold.  If $\ba \not= \bo$ in $\Z^M$ satisfies (\ref{order28}), then it may happen that the indexing 
(\ref{order1}) also satisfies the system of inequalities
\begin{equation*}\label{order50}
\bk_0^T \ba < \bk_1^T \ba < \bk_2^T \ba < \cdots < \bk_N^T \ba.
\end{equation*}
We write $\B(\balpha, \sS)$ for the collection of such lattice points $\ba$.  That is, we define
\begin{equation}\label{order55}
\begin{split}
\B(\balpha, \sS) &= \big\{\ba \in \Z^M : \text{$2 \|\sS\|_{\infty} < \nu(\ba)$}\\
	&\qquad\qquad\text{and $\bk_0^T \ba < \bk_1^T \ba < \bk_2^T \ba < \cdots < \bk_N^T \ba$}\big\}.
\end{split}
\end{equation}
The following lemma establishes a crucial property of $\B(\balpha, \sS)$.

\begin{lemma}\label{lemorder2}  Let the subset $\B(\balpha, \sS)$ be defined by {\rm (\ref{order55})}.  Then $\B(\balpha, \sS)$ 
is an infinite set, and the function $\nu$ restricted to $\B(\balpha, \sS)$, is unbounded on $\B(\balpha, \sS)$.
\end{lemma}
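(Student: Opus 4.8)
The plan is to reduce the lemma to a statement about lattice points avoiding finitely many hyperplanes. Since $\nu$ restricted to $\B(\balpha,\sS)$ is unbounded as soon as for every real number $B$ one can produce a point $\ba \in \B(\balpha,\sS)$ with $\nu(\ba) > B$, and since a family of such $\ba$'s with $\nu(\ba) \to \infty$ is automatically infinite, it suffices to prove this single existence statement for each fixed $B$. First I would fix $B$ and replace it by $B' = \max\{B, 2\|\sS\|_{\infty}\}$, so that the requirement $2\|\sS\|_{\infty} < \nu(\ba)$ in the definition (\ref{order55}) is subsumed by $\nu(\ba) > B'$. I would then isolate the open cone
\[
C = \big\{\bx \in \R^M : \bk_{n-1}^T \bx < \bk_n^T \bx \text{ for } 1 \le n \le N\big\},
\]
which is exactly the set of directions inducing the ordering (\ref{order5}) on $\sS$. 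By (\ref{order5}) the point $\balpha$ lies in $C$, so $C$ is a nonempty open subset of $\R^M$, and since it is cut out by homogeneous linear inequalities it is invariant under multiplication by positive scalars.

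The key observation is that $\{\ba \in \Z^M : \nu(\ba) \le B'\}$, unwound from (\ref{order25}), is precisely the set of $\ba$ for which some nonzero $\bb$ with $\|\bb\|_{\infty} \le B'$ satisfies $\bb^T \ba = 0$. As there are only finitely many such $\bb$, this set is contained in the finite union of hyperplanes $H = \bigcup_{\bb} \{\bx \in \R^M : \bb^T \bx = 0\}$, each of which passes through the origin and is therefore invariant under positive scaling. Thus I want a lattice point $\ba \in C \setminus H$: such a point automatically satisfies the ordering condition in (\ref{order55}) and has $\nu(\ba) > B'$. Because $H$ is a finite union of proper subspaces it has empty interior, so the open set $C$ is not contained in $H$; hence $C \setminus H$ is nonempty and open, and I can fix an open ball $U \subseteq C \setminus H$ of radius $r > 0$. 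Since both $C$ and $H$ are invariant under positive dilation, $tU \subseteq C \setminus H$ for every $t > 0$.

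Finally I would use that $\Z^M$ has covering radius $\tfrac12\sqrt{M}$, so every ball of radius exceeding $\tfrac12\sqrt{M}$ contains a lattice point. The ball $tU$ has radius $tr$, so for all sufficiently large $t$ the set $tU$ contains a lattice point $\ba$, which is nonzero since it lies far from the origin. This $\ba$ lies in $C \setminus H$, hence $\ba \in \B(\balpha,\sS)$ and $\nu(\ba) > B' \ge B$, which is what was needed. Letting $B \to \infty$ produces points of $\B(\balpha,\sS)$ on which $\nu$ takes arbitrarily large values, giving both the unboundedness of $\nu$ on $\B(\balpha,\sS)$ and the infinitude of $\B(\balpha,\sS)$.

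I expect the main point to be the bookkeeping that packages $\{\nu \le B'\}$ as a finite, scaling-invariant union of hyperplanes through the origin and recognizes the ordering region as a full-dimensional open cone; once these two scaling-invariant sets are in hand, the dilation argument together with the covering radius of $\Z^M$ makes the existence of a suitable lattice point essentially automatic. The only place requiring genuine care is verifying that $C$ is full-dimensional, so that it cannot be swallowed by finitely many hyperplanes, which holds because $C$ is open and nonempty; I would also note that the standing hypothesis $2 \le M$ guarantees that every nonzero $\ba$ admits a nonzero integer vector orthogonal to it, so that $\nu(\ba)$ is well defined throughout.
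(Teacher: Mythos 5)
Your proof is correct, but it takes a genuinely different route from the paper's. The paper constructs its witnesses by Diophantine approximation: Dirichlet's theorem produces integers $q$ and lattice points $\bb_q$ with $\|q\balpha - \bb_q\|_{\infty} \le (q+1)^{-1/M}$; since the gaps $\bk_n^T\balpha - \bk_{n-1}^T\balpha$ are bounded below by a positive $\eta$ while the approximation error contributes only $O\bigl(q^{-1/M}\bigr)$ to $\bk_n^T\bb_q - \bk_{n-1}^T\bb_q$ after scaling, the points $\bb_q$ eventually inherit the ordering; the unboundedness of $\nu$ on $\{\bb_q\}$ is then proved by contradiction, using the $\Q$-linear independence of the $\alpha_m$ to get a positive lower bound $\delta_B$ on $|\bc^T\balpha|$ over nonzero integer vectors $\bc$ of norm at most $B$, which is incompatible with $\bc_q^T\bb_q = 0$ for large $q$. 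You instead argue geometrically: the ordering condition cuts out an open, dilation-invariant cone containing $\balpha$, the set $\{\nu \le B'\}$ sits inside a finite, dilation-invariant union of hyperplanes through the origin, and a dilated ball in the complement must eventually capture a lattice point by the covering radius of $\Z^M$. Your argument is more elementary (no Dirichlet's theorem) and isolates cleanly where the injectivity of $\vphi_{\balpha}$ enters --- only through the nonemptiness of the cone, i.e., the strictness of the ordering (\ref{order5}) --- whereas the paper's construction has the incidental feature of producing points near the ray through $\balpha$, which is not needed for Theorem \ref{thmintro2}. Two small points worth making explicit in a final write-up: you should take $B' \ge 1$ so that the hyperplane union is nonempty and hence contains $\bo$, which together with the openness of the dilated ball guarantees the lattice point you find is nonzero; and the identification of $\{\ba \ne \bo : \nu(\ba) \le B'\}$ with a subset of the hyperplane union should quantify over the finitely many nonzero integer vectors $\bb$ with $\|\bb\|_{\infty} \le B'$, exactly as you indicate.
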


\begin{proof}  By hypothesis
\begin{equation}\label{order265}
\eta = \eta(\balpha, \sS) = \min\big\{\bk_n^T \balpha - \bk_{n-1}^T \balpha : 1 \le n \le N\big\}
\end{equation}
is a positive constant that depends on $\balpha$ and $\sS$.  

By Dirichlet's theorem in Diophantine approximation (see \cite{cassels1965} or \cite{schmidt1980}), for each 
positive integer $Q$ there exists an integer $q$ such that $1 \le q \le Q$, and
\begin{equation}\label{order270}
\max\big\{\|q \alpha_m\| : m = 1, 2, \dots , M\big\} \le (Q + 1)^{-\frac{1}{M}} \le (q + 1)^{-\frac{1}{M}},
\end{equation}
where $\|\ \|$ on the left of (\ref{order270}) is the distance to the nearest integer function.  Let $\qq$ be the collection of
positive integers $q$ such that
\begin{equation}\label{order272}
\max\big\{\|q \alpha_m\| : m = 1, 2, \dots , M\big\} \le (q + 1)^{-\frac{1}{M}}.
\end{equation}
Because $2 \le M$, at least one of the coordinates $\alpha_m$ is irrational, and it follows from (\ref{order270}) that
$\qq$ is an infinite set.  

For each positive integer $q$ in $\qq$, we select integers $b_{1 q}, b_{2 q}, \dots , b_{M q}$, so that
\begin{equation}\label{order275}
\|q \alpha_m\| = |q \alpha_m - b_{m q}|,\quad\text{for $m = 1, 2, \dots , M$}.
\end{equation}
Then (\ref{order272}) can be written as
\begin{equation}\label{order277}
\max\big\{|q \alpha_m - b_{m q}| : m = 1, 2, \dots , M\big\} \le (q + 1)^{-\frac{1}{M}}.
\end{equation}
Let $\bb_q  = \bigl(b_{m q}\bigr)$ be the corresponding lattice point in $\Z^M$, so that $q \mapsto \bb_q$ is a map from 
$\qq$ into $\Z^M$.  It follows using (\ref{order265}) and (\ref{order277}), that for each index $n$ we have
\begin{equation*}\label{order295}
\begin{split}
q \eta &\le q \bk_n^T \balpha - q \bk_{n-1}^T \balpha\\
          &= \bk_n^T \bb_q - \bk_{n-1}^T \bb_q + \bigl(\bk_n - \bk_{n-1}\bigr)^T (q \balpha - \bb_q)\\
          &\le \bk_n^T \bb_q - \bk_{n-1}^T \bb_q + 2 \|\sS\|_{\infty} \biggl(\sum_{m = 1}^M |q\alpha_m - b_{m q}|\biggr)\\
          &\le \bk_n^T \bb_q - \bk_{n-1}^T \bb_q + 2 \|\sS\|_{\infty} M (q + 1)^{-\frac{1}{M}}.
\end{split}
\end{equation*}
Therefore for each sufficiently large integer $q$ in $\qq$, the lattice point $\bb_q$ satisfies the system of inequalities
\begin{equation*}\label{order296}
\bk_0^T \bb_q < \bk_1^T \bb_q < \bk_2^T \bb_q < \cdots < \bk_N^T \bb_q.
\end{equation*}
We conclude that for a sufficiently large integer $L$ we have
\begin{equation}\label{order298}
\big\{\bb_q : \text{$L \le q$ and $q \in \qq$}\big\} \subseteq \B(\balpha, \sS).
\end{equation}
This shows that $\B(\balpha, \sS)$ is an infinite set.

To complete the proof we will show that the function $\nu$ is unbounded on the infinite collection of lattice points
\begin{equation}\label{order302}
\big\{\bb_q : \text{$L \le q$ and $q \in \qq$}\big\}.
\end{equation}
If $\nu$ is bounded on (\ref{order302}), then there exists a positive integer $B$ such that
\begin{equation}\label{order305}
\nu(\bb_q) \le B
\end{equation}
for all points $\bb_q$ in the set (\ref{order302}).  Let $\cc_B$ be the finite set
\begin{equation*}\label{order310}
\cc_B = \big\{\bc \in \Z^M : 1 \le \|\bc\|_{\infty} \le B\big\}.
\end{equation*}
Because $\alpha_1, \alpha_2, \dots , \alpha_M$, are $\Q$-linearly independent, and $\cc_B$ is a finite set of nonzero
lattice points, we have
\begin{equation}\label{order320}
0 < \delta_B = \min\bigg\{\biggl|\sum_{m = 1}^M c_m \alpha_m\biggr| : \bc \in \cc_B\bigg\}.
\end{equation}
By our assumption (\ref{order305}),  for each point $\bb_q$ in (\ref{order302}) there exists a point 
$\bc_q = (c_{m q})$ in $\cc_B$, such that 
\begin{equation}\label{order325}
\bc_q^T \bb_q = \sum_{m = 1}^M c_{m q} b_{m q} = 0.
\end{equation}
Using (\ref{order277}) and (\ref{order325}), we find that
\begin{equation}\label{order330}
\begin{split}
q \delta_B &\le q \biggl|\sum_{m = 1}^M c_{m q} \alpha_m \biggr|\\
             &= \biggl|\sum_{m = 1}^M c_{m q}\bigl(q \alpha_m - b_{m q}\bigr)\biggr|\\
             &\le \biggl(\sum_{m = 1}^M |c_{m q}|\biggr) \max\big\{|q \alpha_m - b_{m q}| : m = 1, 2, \dots , M\big\}\\
             &\le M B (q + 1)^{-\frac{1}{M}}.
\end{split}
\end{equation}
But (\ref{order330}) is impossible when $q$ is sufficiently large, and the contradiction implies that the assumption
(\ref{order305}) is false.  We have shown that $\nu$ is unbounded on the set (\ref{order302}).  In view of (\ref{order298}),
the function $\nu$ is unbounded on $\B(\balpha, \sS)$.
\end{proof}

\section{Proof of Theorem \ref{thmintro2}}  

If $M = 1$ then the inequality (\ref{intro125}) follows from Corollary \ref{corintro1}.  Therefore we assume that $2 \le M$.

Let $\vphi_{\balpha} : \Z^M \rightarrow \R$ be an injective homomorphism, and let the set $\sS$ be indexed 
so that {\rm (\ref{intro115})} and {\rm (\ref{intro120})} hold.  It follows from Lemma \ref{lemorder2} that the collection of
lattice points $\B(\balpha, \sS)$ defined by (\ref{order55}), is an infinite set, and the function $\nu$ defined by 
(\ref{order25}) is unbounded on $\B(\balpha, \sS)$.

Let $\ba$ be a lattice point in $\B(\balpha, \sS)$.  If $F : (\R/\Z)^M \rightarrow \C$ is given by (\ref{intro70}), we  define
an associated trigonometric polynomial $F_{\ba} : \R/\Z \rightarrow \C$ in one variable by
\begin{equation}\label{order360}
F_{\ba}(t) = \sum_{\bk \in \sS} \tF(\bk) e\bigl(\bk^T \ba t\bigr) = \sum_{n = 0}^N \tF(\bk_n) e\bigl(\bk_n^T \ba t\bigr),
\end{equation}
where the equality on the right of (\ref{order360}) uses the indexing (\ref{intro115}) induced by $\vphi_{\balpha}$.  The 
hypothesis (\ref{intro120}) implies that the integer exponents on the right of (\ref{order360}) satisfy the system of inequalities
\begin{equation}\label{order365}
\bk_0^T \ba < \bk_1^T \ba < \bk_2^T \ba < \cdots < \bk_N^T \ba.
\end{equation}
Then it follows from (\ref{intro40}), (\ref{order360}), and (\ref{order365}), that
\begin{equation}\label{order370}
\bigl|\tF\bigl(\bk_n\bigr)\bigr| \le \binom{N}{n} \mM(F_{\ba}),\quad\text{for each $n = 0, 1, 2, \dots , N$}.
\end{equation}
We have proved that the system of inequalities (\ref{order370}) holds for each lattice point $\ba$ in $\B(\balpha, \sS)$. 

To complete the proof we appeal to an inequality of Boyd \cite[Lemma 2]{boyd1981}, which asserts that if $\bb$ is a
parameter in $\Z^M$ then
\begin{equation}\label{order375}
\limsup_{\nu(\bb) \rightarrow \infty} \mM\bigl(F_{\bb}\bigr) \le \mM(F).
\end{equation}
More precisely, if $\bb_1, \bb_2, \bb_3, \dots $, is a sequence of points in $\Z^M$ such that 
\begin{equation}\label{order380}
\lim_{j \rightarrow \infty} \nu(\bb_j) = \infty, 
\end{equation}
then
\begin{equation}\label{order385}
\limsup_{j \rightarrow \infty} \mM\bigl(F_{\bb_j}\bigr) \le \mM(F).
\end{equation}
Because $\nu$ is unbounded on $\B(\balpha, \sS)$, there exists a sequence $\bb_1, \bb_2, \bb_3, \dots $, 
contained in $\B(\balpha, \sS)$ that satisfies (\ref{order380}).  Hence the sequence $\bb_1, \bb_2, \bb_3, \dots $,
in $\B(\balpha, \sS)$ also satisfies (\ref{order385}).  From (\ref{order370}) we have
\begin{equation}\label{order390}
\bigl|\tF\bigl(\bk_n\bigr)\bigr| \le \binom{N}{n} \mM(F_{\bb_j}),
\end{equation}
for each $n = 0, 1, 2, \dots , N$, and for each $j = 1, 2, 3, \dots $.  The inequality (\ref{intro125}) plainly follows from 
(\ref{order385}) and (\ref{order390}).  This completes the proof of Theorem \ref{thmintro2}.
\medskip

Boyd conjectured in \cite{boyd1981} that (\ref{order375}) could be improved to
\begin{equation}\label{order400}
\lim_{\nu(\bb) \rightarrow \infty} \mM\bigl(F_{\bb}\bigr) = \mM(F).
\end{equation}
The proposed identity (\ref{order400}) was later verified by Lawton \cite{lawton1983} (see also \cite{dobrowolski2017} and
\cite{lalin2013}).  Here we have used Boyd's inequality (\ref{order375}) because it is simpler to prove than (\ref{order400}), 
and the more precise result (\ref{order400}) does not effect the inequality (\ref{intro125}).

\end{document}